\newtheorem{theorem}{Theorem}
\title{\LARGE \bf
Sequential Charging Station Location Optimization under Uncertain Charging Behavior and User Growth
}
\author{Wenjia Shen$^{1}$, Bo Zhou$^{2}$, Ruiwei Jiang$^{2}$, and Siqian Shen$^{2}$
\thanks{$^{1}$Wenjia Shen is with the School of Management \& Engineering, Nanjing University, Nanjing, China.
        {\tt\small wenjia150033@gmail.com}}%
\thanks{$^{2}$Bo Zhou, Ruiwei Jiang, and Siqian Shen are with the Department of Industrial \& Operations Engineering, University of Michigan, Ann Arbor, MI U.S.A.
        {\tt\small \{bozum,ruiwei,siqian\}@umich.edu}}%
}
\begin{document}

\maketitle
\thispagestyle{empty}
\pagestyle{empty}

\begin{abstract}
Charging station availability is crucial for a thriving electric vehicle market. Due to budget constraints, locating these stations usually proceeds in phases, which calls for careful consideration of the (random) charging demand growth throughout the planning horizon. This paper integrates user choice behavior into two-stage and multi-stage stochastic programming models for intracity charging station planning under demand uncertainty. We derive a second-order conic representation for the nonlinear, nonconvex formulation by taking advantage of the binary nature of location variables and propose subgradient inequalities to accelerate computation. Numerical results demonstrate the value of employing multi-stage models, particularly in scenarios of high demand fluctuations, increased demand dispersion, and high user sensitivity to the distance-to-recharge.
\end{abstract}
\section{INTRODUCTION}
A transition from conventional fuel vehicles to electric vehicles (EVs) is underway to meet carbon emission targets \cite{Canada}. 
Although policy efforts and market trends are supporting a promising outlook, wider EV adoption is still hindered by insufficient availability of charging facilities \cite{kchaou2021charging}. 
Thus, strategic location planning is essential for supporting an expanding EV market. Given the high cost of charger procurement and power grid expansion \cite{kabli2020stochastic}, the planning is typically long-term and sequential. In each phase of time, only a small batch of stations is constructed to achieve a balance between market coverage and budget limitations of the investors.

In sequential charging station location, it is imperative to not only consider the current demand but also prepare for future charging needs. 
In a dynamic and evolving market, future EV charging demand grows in a stochastic manner \cite{Canada}. 
Although there have been extensive analyses on multi-year planning of intracity stations, research incorporating stochastic demand growth remains limited. Most studies are built on deterministic scenarios where future demand is known \cite{lin2019multistage,lamontagne2023optimising} or calculated using external data such as surveys \cite{vashisth2022multi} and historical charging records \cite{zhang2023multi}. Existing stochastic extensions either consider daily power load fluctuations rather than long-term user growth \cite{wang2017stochastic}, or only employ two-stage models without extending to a multi-stage decision framework \cite{kabli2020stochastic}. Furthermore, the aforementioned studies rarely address the user choice among different stations or the competition between home charging and public charging options.

To fill in the aforementioned gaps, we depict the choice of EV users patronizing charging stations using a multinomial logit (MNL) model. Then, we integrate MNL into a multi-stage stochastic program considering random demand growth and sequential location of charging stations. This yields a fractional integer programming formulation, which is nonlinear and nonconvex. Nevertheless, we derive a second-order conic representation by taking advantage of the binary nature of the location variables. As compared to the state-of-the-art, this drastically reduces the formulation size and accelerates its computation. Furthermore, we conduct numerical experiments based on real EV demand data to demonstrate the value of our model and the effectiveness of our solution approach. We summarize the main contributions as follows.
\begin{enumerate}
    \item We propose the first multi-stage stochastic programming models for sequentially locating charging stations, incorporating random demand growth and user choice behavior.
    \item We derive a second-order conic representation, accompanied by subgradient inequalities, to tackle the ensuing fractional integer programming formulation.
    \item We conduct extensive numerical experiments to demonstrate the value of considering a multi-stage model as opposed to a two-stage one, as well as the effectiveness of the proposed solution approach.
\end{enumerate}

The remainder of this paper is organized as follows. Section \ref{sec:review} reviews the relevant literature, Section \ref{sec:problem} describes our models, and Section \ref{sec:solve} presents the solution approach. Experiment results are reported in Section \ref{sec:numerical} and conclusions are drawn in Section \ref{sec:conclusion}.

\section{LITERATURE REVIEW}\label{sec:review}
An extensive literature has emerged regarding charging station location. We focus on long-term models in Section \ref{subsec:review1} and EV user choice behaviors in Section \ref{subsec:review2}.

\subsection{Long-term Charging Station Location}\label{subsec:review1}
Previous research on long-term planning for charging stations can be broadly classified into two streams: intercity and intracity models. Research in the first stream focuses on the demands arising from the flow of origin-destination city pairs, predominantly establishing charging stations along highways to alleviate ``range anxiety'' \cite{kchaou2021charging,song2023learning}. The multi-period charging station location model was proposed in \cite{chung2015multi}, aiming at maximizing the total covered flow of the Korean expressway network throughout the planning horizon. Subsequent studies have expanded to include annually more cities joining the EV adoption network \cite{li2016multi} and the placement of fast-charging stations \cite{zhang2017incorporating}. More recent research has incorporated parameter uncertainties in long-term planning, such as uncertain driving range, driver behavior, and operational costs \cite{davidov2017planning}, as well as the yearly fluctuations in intercity traffic flow \cite{xie2018long,kadri2020multi}.

Research in the second stream adopts facility location models, where the demand arises from daily intracity activities and is associated with specific nodes of a city network \cite{kchaou2021charging, zhang2021values}. These node-based models are more appropriate when deployment decisions are constrained to an urban setting, where driver movement is typically limited around their assigned locations \cite{anjos2020increasing}. For example, \cite{lin2019multistage} formulated a mixed-integer second-order conic program for locating e-bus charging stations, addressing bus charging needs at depots. \cite{zhang2023multi} integrated queuing models and diverse market scenarios to enhance the flexibility and applicability of charging stations. Subsequently, the model was extended by incorporating more objectives, including proposing multiple solutions for different investor concerns, reducing initial investment, or maximization of EV adoption among residents \cite{lamontagne2023optimising,vashisth2022multi}. 
These studies, however, often depicted a deterministic context where user demand is either known or calculated using external data. Few relevant studies, e.g., \cite{wang2017stochastic}, captured the stochastic daily charging loads and suggested an integrated model for charging stations and power distribution, yet it did not address the long-term (e.g., annual) stochastic demand growth. Ref.~\cite{kabli2020stochastic} employed Monte Carlo simulation for demand scenario generation and developed a two-stage stochastic model. In contrast to these studies and to make applicable for the evolving EV market, this paper focuses on the long-term growth of stochastic charging demands.

\subsection{EV User Choice Behavior}\label{subsec:review2} 
User preferences affect the charging demand of each station, but this has not been fully addressed in the current literature.
In the intercity studies (e.g., \cite{chung2015multi,li2016multi}), users only choose the stations along the travel paths. In most intracity studies, the decision-making process is often 
omitted and the demand at each station is directly specified (see, e.g., \cite{vashisth2022multi,wang2017stochastic}). Alternatively, it is determined based on simplified criteria, such as selecting the nearest station \cite{zhang2023multi} or being restricted to stations within the same geographical grid \cite{kabli2020stochastic}. User behavior depiction was only considered in \cite{lamontagne2023optimising}, which applied a discrete choice model in patronizing charging stations. Unfortunately, their approach did not scale well to large-scale instances, for which they applied heuristics.

Facility location models typically characterize user behaviors by the utility of facilities in a discrete choice model \cite{qi2022sequential}. Existing research primarily divides into deterministic and probabilistic choice models. The former assumes that users always go for the facility with the highest utility (e.g., \cite{lamontagne2023optimising,gentile2018integer}), while the latter splits demand across multiple facilities with a certain probability (e.g., \cite{sun2016fast}), including the MNL model. For EV charging, MNL has been empirically validated by the data from California~\cite{lee2020exploring} and German EV users~\cite{anderson2023real}, 
which explored the impact of, e.g., user characteristics, facility features, and facility availability. In this paper, we adopt the MNL model to describe the charging behavior of EV users probabilistically.

\section{PROBLEM FORMULATIONS}\label{sec:problem}

\subsection{Deterministic Multi-period Model}\label{model:deter}
Over a planning horizon (a set \(T\) of years), an investor seeks to deploy charging stations in a city to serve the EV users located in a set $I$ of nodes and maximize the total revenue. In each year $t \in T$, the charging demand is $d_i^t$ at each node $i \in I$. To construct a new station, the investor selects a site $j$ from a set \(J\) of candidate locations and chooses a type $k \in K$. The set $K$ includes various combinations of station scale levels and characteristics (e.g. fast versus slow charging options). The construction cost for a station is $c_{jk}$ and the unit revenue of a type $k$ station serving a unit of demand is $r_k$. The investor operates within an annual budget of $B^t$, and the location decision is captured by a binary variable $x_{jk}^t$, such that $x_{jk}^t = 1$ if and only if we locate a charging station of type $k$ at location $j$ in year $t$. Finally, any existing facilities at time \(0\) is indicated by $x_{jk}^0$.

To depict the EV user's charging choice, we follow MNL~\cite{mcfadden1973conditional} and define the utility of a user at node \(i\) to charge at a type-$k$ station at location $j$ as $u_{ijk} := \alpha_{ijk}+\beta s_{ij}+\varepsilon_{ijk}$, where $\alpha_{ijk}$ is the attraction level specified by location and type, $s_{ij}$ is the distance between node $i$ and site $j$, $\beta$ is the (negative) impact coefficient for \(s_{ij}\), and $\varepsilon_{ijk}$ is a random noise. EV users can also install home-charging piles and the utility is defined as $u_{i0} := \alpha_{i0}+\varepsilon_{i0}$. Assume that $\varepsilon$ is i.i.d. Gumbel distributed, then in year $t$, the probability of a node-$i$ user charging at a type-\(k\) facility of site \(j\) is

\begin{equation}
    p_{ijk}^{t} = \frac{w_{ijk}x_{jk}^{t}}{w_{i0}+\sum_{j} \sum_{k} w_{ijk}x_{jk}^{t}}, \label{MNL}
\end{equation}
where we denote $w_{ijk}:=\exp(\alpha_{ijk}+\beta s_{ij})$ and $w_{i0}:=\exp(\alpha_{i0})$ for notation brevity.

If the charging demand is deterministic and known, the location problem is formulated as
\begin{subequations}
\begin{align}
    \max_x \ &\sum_{t} \sum_{i} \sum_{j} \sum_{k} r_k d_i^t p_{ijk}^t \label{d_objective} \\
    \text{ s.t. }\ &\sum_{j} \sum_{k} (x_{jk}^t - x_{jk}^{t-1}) c_{jk} \leq B^t, && \forall t, \label{d_budget} \\
    \quad & x_{jk}^{t-1} \leq x_{jk}^t,&& \forall j,k,t, \label{d_notdestroy}\\
    \quad & x_{jk}^t \in \{0,1\},&&\forall j,k,t. \label{d_variable}
\end{align}
\end{subequations}

The objective function \eqref{d_objective} seeks to maximize the total revenue throughout the planning horizon, constraints~\eqref{d_budget} designate the annual construction budget, and constraints~\eqref{d_notdestroy} require that once a charging station is built, it cannot be dismantled or closed in the subsequent years.



\subsection{Multi-stage Stochastic Programming Model}\label{model:multi}
In reality, the growth of charging demands follows a general stochastic process.
In each stage \(t \in T\), the investor locates the charging stations \(x^t\) based on the historical demand data up to stage \(t-1\). Then, she learns the actual demand \(d^t := \{d_i^t, i \in I\}\) of this stage and updates her projection for the future demand. This produces the following multi-stage stochastic programming model:

\begin{align}
    & \max_{x^1} \ \text{E}_{d^1}\Big[ \sum_{i} \sum_{j} \sum_{k} r_k d_i^1 p_{ijk}^1 \tag{{\bf MS}} \label{m_objective} \\
    & + \max_{x^2}\Big\{ \text{E}_{d^2|d^1}\Big[\sum_{i} \sum_{j} \sum_{k} r_k d_i^2 p_{ijk}^2 + \max_{x^3} \Big\{ \ldots \nonumber \\
    & +\max_{x^T}\Big\{\text{E}_{d^T|d^1, \ldots, d^{T-1}}\Big[\sum_{i} \sum_{j} \sum_{k} r_k d_i^T p_{ijk}^T \Big] \Big\} \ldots \Big\} \Big] \nonumber \\
    & \text{ s.t. } \ \text{\eqref{d_budget}--\eqref{d_variable}}. \nonumber
\end{align}
Note that, in model~\eqref{m_objective}, the location decision \(x^t\) can be dynamically adjusted according to the most recent demand data \(d^{t-1}\). If we refrain from such adjustment and make all location decisions before observing demands, however, then it reduces to the following two-stage model:
\begin{align}
    \max_x &\ \text{E}_{d^1, \ldots, d^T}\Big[\sum_{t} \sum_{i} \sum_{j} \sum_{k} r_k d_i^t p_{ijk}^t\Big] \tag{{\bf TS}} \label{2_objective} \\
    \text{ s.t. } & \ \text{\eqref{d_budget}--\eqref{d_variable}}. \nonumber
\end{align}

\section{SOLUTION APPROACH}\label{sec:solve}
Solving~\eqref{m_objective} directly is intractable because (i) there are infinite (uncountable) number of \(d^t\) realizations and (ii) the MNL model~\eqref{MNL} makes the formulation nonlinear and nonconvex. For difficulty (i), we approximate the stochastic process of demand growth using a scenario tree, consisting of a set \(S\) of scenarios. Each scenario \(s \in S\) takes place with probability \(\pi^s\) and characterizes a fully realized demand trajectory \(\{d^{ts}_i: t \in T, i \in I\}\), and to describe a general (discrete) stochastic process, any two scenarios \(r, s\) may share the same realized demand up to a stage \(t \leq T-1\). To this end, we define set \(S(t,s)\) for the set of scenarios that have identical path up to stage \(t\) when compared with scenario \(s\) (including \(s\) itself). This gives rise the following sample average approximation of~\eqref{m_objective}:
\begin{align}
    \max_{x, \ q} \ & \sum_{s} \pi^s \sum_{t} \sum_{i} q^{ts}_{i} \tag{{\bf MS-SAA}} \label{scenario-multi} \\
    \text{s.t.} \ \ & x_h^{tr} = x_h^{ts}, \qquad \qquad \ \forall t,h, (r,s) \in S(t,s),\nonumber\\
    & q^{ts}_i \leq \sum_h \frac{d^{ts}_{ih}w_{ih}x^{ts}_h}{w_{i0}+\sum_{h} w_{ih}x^{ts}_h}, \qquad \ \forall i,t,s, \label{q-def}\\ 
    &\sum_{h} (x_{h}^{ts} - x_{h}^{t-1,s}) c_{h} \leq B^t, \qquad \quad \ \ \forall t,s, \nonumber \\
    & x_{h}^{t-1,s} \leq x_{h}^{ts}, \qquad \qquad \qquad \qquad \ \forall h,t,s, \nonumber\\
    & x_{h}^{ts} \in \{0,1\}, \qquad \qquad \qquad \qquad \ \ \forall h,t,s. \nonumber
\end{align}
The first constraints in~\eqref{scenario-multi} designate the non-anticipative nature of the location policy, ensuring that the locations \(x^{ts}\) in stage \(t\) are determined only based on the demand realizations up to stage \(t-1\). In addition, we combine the indices \((j,k)\) into a single index \(h \in H := J \times K\) and define \(d^{ts}_{ih} := r_k d^{ts}_i\). Accordingly, variable \(q^{ts}_i\) represents the revenue of charging from node \(i\) in time \(t\) and scenario \(s\), and its definition through~\eqref{q-def} inherits the nonlinear and nonconvex MNL model~\eqref{MNL}.

To address the difficulty (ii), the state-of-the-art approach~\cite{borrero2016simple} drew upon 0-1 fractional programming. The first approach, denoted by R1 in~\cite{borrero2016simple}, defines a new variable \(y^{ts}_{i} := 1/(w_{i0}+\sum_{h} w_{ih}x^{ts}_h)\) and rewrites the right-hand side of~\eqref{q-def} as \(\sum_h d^{ts}_{ih}w_{ih}x^{ts}_h y^{ts}_{i}\). Then, R1 introduces McCormick inequalities to linearize the bilinear term \(x^{ts}_h y^{ts}_{i}\). The second approach, denoted by R4 in~\cite{borrero2016simple}, defines a second new variable \(z^{ts}_{i} := (\sum_h d^{ts}_{ih}w_{ih}x^{ts}_h)/(w_{i0}+\sum_{h} w_{ih}x^{ts}_h)\). This trivially linearizes~\eqref{q-def} but introduces bilinear constraints
\[
z^{ts}_{i}\Big(w_{i0}+\sum_{h} w_{ih}x^{ts}_h\Big) \ = \ \sum_h d^{ts}_{ih}w_{ih}x^{ts}_h, \quad \forall i,t,s.
\]
To handle these constraints efficiently, R4 assumes that \(w_{i0}\) and \(w_{ih}\) are integer-valued and encodes $\sum_{h} w_{ih}x^{ts}_h$ using a binary expansion. Then, R4 evokes McCormick inequalities to linearize the ensuing bilinear terms.

Different from these approaches, we propose to recast~\eqref{q-def} as \emph{second-order conic constraints} and do so \emph{without introducing new variables}. This technical advancement maintains the formulation size of~\eqref{scenario-multi} (see Table~\ref{table:summary}), making it scalable to large-scale instances. Numerically, this significantly accelerates the computation of~\eqref{scenario-multi} as compared to R1 and R4 (see Table~\ref{table:compute}).

\subsection{Second-Order Conic Representation}\label{Inequality}

We first present the main technical result of the paper.
\begin{theorem} \label{thm:soc}
The (nonlinear and nonconvex) constraints \eqref{q-def} admit the following second-order conic representation:
\begin{align}
& \left\| \begin{bmatrix}
2\sqrt{\overline{d}^{ts}_i w_{i0}}\\
\{2\sqrt{(\overline{d}^{ts}_i - d^{ts}_{ih}) w_{ih}} x^{ts}_h\}_{h\in H}\\
w_{i0} + \sum_h w_{ih} x^{ts}_h - \overline{d}^{ts}_i + q^{ts}_i
\end{bmatrix} \right\|_2 \nonumber \\[0.5em]
\leq \ \ & w_{i0} + \sum_h w_{ih} x^{ts}_h + \bar{d}^{ts}_i - q^{ts}_i, \tag{\bf SOC} \label{soc}
\end{align}
where \(\overline{d}^{ts}_i := \max_{h \in H}\{d^{ts}_{ih}\}\).
\end{theorem}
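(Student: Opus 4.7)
The plan is to turn the fractional constraint~\eqref{q-def} into a single bilinear inequality and then recast that bilinear inequality as a second-order conic constraint, using the binary nature of $x^{ts}_h$ (specifically $(x^{ts}_h)^2 = x^{ts}_h$) to absorb the linear $x^{ts}_h$ terms into squares.

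The first step is to observe that $X := w_{i0} + \sum_h w_{ih} x^{ts}_h \geq w_{i0} > 0$, so the denominator in~\eqref{q-def} is strictly positive. Using the pivot $\overline{d}^{ts}_i$, rewrite
\[
\sum_h \frac{d^{ts}_{ih} w_{ih} x^{ts}_h}{X} \ = \ \overline{d}^{ts}_i - \frac{A}{X},
\]
where $A := \overline{d}^{ts}_i w_{i0} + \sum_h (\overline{d}^{ts}_i - d^{ts}_{ih}) w_{ih} x^{ts}_h$. Since $\overline{d}^{ts}_i = \max_h d^{ts}_{ih}$, every coefficient in $A$ is nonnegative, so $A \geq 0$. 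Multiplying~\eqref{q-def} by $X$ then yields the bilinear inequality $(\overline{d}^{ts}_i - q^{ts}_i)\, X \geq A$.

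Next, exploit the binary structure: because $(x^{ts}_h)^2 = x^{ts}_h$, each summand of $A$ is itself a perfect square, and
\[
4A \ = \ \bigl(2\sqrt{\overline{d}^{ts}_i w_{i0}}\bigr)^2 + \sum_h \bigl(2\sqrt{(\overline{d}^{ts}_i - d^{ts}_{ih}) w_{ih}}\, x^{ts}_h\bigr)^2.
\]
Applying the algebraic identity $4ab = (a+b)^2 - (a-b)^2$ with $a = \overline{d}^{ts}_i - q^{ts}_i$ and $b = X$, the bilinear inequality becomes
\[
(X + \overline{d}^{ts}_i - q^{ts}_i)^2 \ \geq \ (X - \overline{d}^{ts}_i + q^{ts}_i)^2 + 4A,
\]
which, after taking a square root, is exactly~\eqref{soc}.

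Finally, I would verify the converse so as to establish equivalence rather than just necessity. This is the one spot that needs care: \eqref{soc} implicitly enforces $X + \overline{d}^{ts}_i - q^{ts}_i \geq 0$, and together with $4(\overline{d}^{ts}_i - q^{ts}_i)X \geq 4A \geq 0$ and $X > 0$ this forces $\overline{d}^{ts}_i - q^{ts}_i \geq 0$, from which $(\overline{d}^{ts}_i - q^{ts}_i)\, X \geq A$ — and hence~\eqref{q-def} — follows. The main obstacle is precisely this sign bookkeeping: once one checks that the SOC simultaneously delivers the bilinear inequality and the correct sign for $\overline{d}^{ts}_i - q^{ts}_i$ (without having to impose it as a separate constraint), everything else is routine algebra hinging on the two key moves, namely the $\overline{d}^{ts}_i$-pivot and the binary squaring.
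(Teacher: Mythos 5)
Your proof is correct and follows essentially the same route as the paper's: exploit $(x^{ts}_h)^2 = x^{ts}_h$ to write the constraint as the bilinear (rotated-cone) inequality $(\overline{d}^{ts}_i - q^{ts}_i)\,X \geq A$ with $A \geq 0$, then convert to standard SOC form via $4ab = (a+b)^2 - (a-b)^2$. In fact you are more explicit than the paper about the sign bookkeeping needed for the square-root step (the paper simply asserts the final equivalence), and the analogous one-line check that $X + \overline{d}^{ts}_i - q^{ts}_i \geq 0$ in the forward direction follows from the same observation you make for the converse.
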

\begin{proof}
In view that \(w_{ih} > 0\) for all \(h \in H \cup \{0\}\) and \(x^{ts}_h = (x^{ts}_h)^2\) because \(x^{ts}_h \in \{0,1\}\), we recast~\eqref{q-def} as
$
q^{ts}_i \big(w_{i0}+\sum_{h} w_{ih}x^{ts}_h\big) \leq \sum_h \overline{d}^{ts}_i w_{ih} x^{ts}_h - \sum_h \big(\overline{d}^{ts}_i - d^{ts}_{ih}\big) w_{ih} \big(x^{ts}_h\big)^2.
$
It follows that
\begin{align*}
\text{\eqref{q-def}} \ \Longleftrightarrow \ & \sum_h \big(\overline{d}^{ts}_i - d^{ts}_{ih}\big) w_{ih} \big(x^{ts}_h\big)^2 \leq \sum_h \overline{d}^{ts}_i w_{ih} x^{ts}_h - \\
& q^{ts}_i \Big(w_{i0}+\sum_{h} w_{ih}x^{ts}_h\Big) \\
\Longleftrightarrow \ & \overline{d}^{ts}_i w_{i0} + \sum_h \big(\overline{d}^{ts}_i - d^{ts}_{ih}\big) w_{ih} \big(x^{ts}_h\big)^2 \leq \\ & \big(\overline{d}^{ts}_i - q^{ts}_i \big) \Big(w_{i0}+\sum_{h} w_{ih}x^{ts}_h\Big) \\
\Longleftrightarrow \ & \text{\eqref{soc}}.
\end{align*}
\end{proof}
Theorem~\ref{thm:soc} allows us to replace constraints~\eqref{q-def} with~\eqref{soc}. That is, we can solve~\eqref{scenario-multi} in an off-the-shelf solver (e.g., Gurobi) as a mixed-integer second-order conic program (MISOCP). Notably, we do not need to introduce new variables or assume integer-valued parameters. We compare the formulation size of our approach with the state-of-the-art in Table~\ref{table:summary}.
\begin{table}[htbp]
    \centering
    \caption{Sizes of Various Reformulations for~\eqref{scenario-multi}.\\
    \(B := \sum_i (\lfloor  \log_2(\sum_h|w_{ih}|) \rfloor + 1 )\)}
    \label{table:summary}
    \resizebox{.49\textwidth}{!}{
    \begin{tabular}{c c c c}
    \toprule
        Reformulation & \makecell{Continuous\\ variables} & \makecell{Binary\\ variables} & \makecell{Constraints}\\
    \midrule
        This paper & $|I||T||S|$ & $|H||T||S|$ & $\mathcal{O}\big((|H||S|+|I|)|T||S|\big)$\\
        \cite{borrero2016simple}--R1 & $(|H|+1)|I||T||S|$ & $|H||T||S|$ & $\mathcal{O}\big((|H||I|+|H||S|)|T||S|\big)$\\
        \cite{borrero2016simple}--R4 & $(B+|I|)|T||S|$ & $(B+|H|)|T||S|$ & $\mathcal{O}\big((|H||S|+|I|+|H|+B+1)|T||S|\big)$\\
    \bottomrule
    \end{tabular}
    }
\end{table}

Although commercial solvers can solve MISOCP directly, the numerical performance is in general inferior that of solving mixed-integer \emph{linear} programs (MILPs). This motivates us to seek a linear representation of~\eqref{scenario-multi}, which is fortunately available thanks to Theorem~\ref{thm:soc}.
\begin{theorem} \label{thm:subgradient}
For fixed \(t,s\), define function \(Q^{ts}_i: [0,1]^{|H|} \rightarrow \mathbb{R}\), such that
\[
Q^{ts}_i(x^{ts}) = \sum_h w_{ih} \left(\frac{\overline{d}^{ts}_ix^{ts}_h - \big(\overline{d}^{ts}_i - d^{ts}_{ih}\big) \big(x^{ts}_h\big)^2}{w_{i0}+\sum_{h} w_{ih}x^{ts}_h}\right).
\]
Then, \(Q^{ts}_i\) is concave over \([0,1]^{|H|}\). In addition, constraints~\eqref{q-def} are equivalent to
\begin{equation*}
q^{ts}_i \leq Q^{ts}_i\big(\hat{x}^{ts}\big) + \sum_h \left( \frac{\partial Q^{ts}_i}{\partial \hat{x}^{ts}_h} \right) \big(x^{ts}_h - \hat{x}^{ts}_h\big), \forall \hat{x}^{ts} \in \{0,1\}^{|H|},
\end{equation*}
where \(\partial Q^{ts}_i/\partial \hat{x}^{ts}_h\) is the partial derivative of \(Q^{ts}_i\) at \(\hat{x}^{ts}\).
\end{theorem}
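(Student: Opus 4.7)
The plan has two parts: (i) establish concavity of $Q^{ts}_i$ on $[0,1]^{|H|}$ by recognizing that its hypograph is, via the SOC representation in Theorem~\ref{thm:soc}, a convex set; and (ii) derive the claimed equivalence between \eqref{q-def} and the family of subgradient inequalities via the standard first-order characterization of concave differentiable functions, exploiting the fact that the feasible $x^{ts}$ is itself binary.

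For the concavity step, I would revisit the algebraic chain in the proof of Theorem~\ref{thm:soc} and observe that clearing the strictly positive denominator $w_{i0} + \sum_h w_{ih} x^{ts}_h$, squaring the SOC inequality, and applying the identity $(a+b)^2 - (a-b)^2 = 4ab$ all remain valid for continuous $x^{ts} \in [0,1]^{|H|}$; the \emph{only} use of binarity in that earlier proof was the identity $x^{ts}_h = (x^{ts}_h)^2$, which I deliberately avoid here. The upshot is that, over $[0,1]^{|H|}$, the constraint $q^{ts}_i \le Q^{ts}_i(x^{ts})$ is equivalent to \eqref{soc}, and since SOC constraints cut out convex regions in $(x^{ts},q^{ts}_i)$, the hypograph of $Q^{ts}_i$ on $[0,1]^{|H|}$ is convex, so $Q^{ts}_i$ is concave. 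Because the denominator is bounded away from zero on the box, $Q^{ts}_i$ is in fact smooth, so its gradient in the statement is well defined.

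For the equivalence, I first note that $Q^{ts}_i(x^{ts})$ coincides with the right-hand side of \eqref{q-def} whenever $x^{ts} \in \{0,1\}^{|H|}$, because then $(x^{ts}_h)^2 = x^{ts}_h$ cancels the $(\overline{d}^{ts}_i - d^{ts}_{ih})$ correction. For the ``only if'' direction, suppose $(x^{ts},q^{ts}_i)$ with binary $x^{ts}$ satisfies \eqref{q-def}; then $q^{ts}_i \le Q^{ts}_i(x^{ts})$, and the first-order inequality for concave differentiable functions at any $\hat{x}^{ts} \in \{0,1\}^{|H|} \subset [0,1]^{|H|}$ yields $Q^{ts}_i(x^{ts}) \le Q^{ts}_i(\hat{x}^{ts}) + \sum_h (\partial Q^{ts}_i/\partial \hat{x}^{ts}_h)(x^{ts}_h - \hat{x}^{ts}_h)$, giving the stated family of inequalities. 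For the ``if'' direction, because $x^{ts}$ is itself binary, specializing $\hat{x}^{ts} = x^{ts}$ in the family collapses the right-hand side to $Q^{ts}_i(x^{ts})$, which again equals the right-hand side of \eqref{q-def}.

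The main obstacle I expect is the concavity claim itself: the closed form of $Q^{ts}_i$ as a sum of quadratic-over-linear terms sharing a common denominator is not obviously concave, a direct Hessian computation is cumbersome, and a termwise argument fails because individual summands are not concave. Routing the proof through the SOC representation of Theorem~\ref{thm:soc} sidesteps this difficulty entirely; the remaining work is bookkeeping, specifically checking that every equivalence in the earlier derivation survives when the binarity hypothesis is dropped.
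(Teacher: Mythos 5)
Your proposal is correct and follows essentially the same route as the paper: concavity of \(Q^{ts}_i\) via the second-order conic representability of its hypograph from Theorem~\ref{thm:soc}, and the equivalence via the first-order characterization of concave functions together with the fact that \(Q^{ts}_i\) agrees with the right-hand side of~\eqref{q-def} at binary points. You supply more detail than the paper (notably, verifying that the chain of equivalences in Theorem~\ref{thm:soc} survives when the binarity hypothesis \(x^{ts}_h=(x^{ts}_h)^2\) is dropped, since \(Q^{ts}_i\) absorbs the quadratic terms), but the underlying argument is the same.
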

\begin{proof}
First, the concavity of \(Q^{ts}_i\) follows from the fact that its hypograph is second-order conic representable, as shown in Theorem~\ref{thm:soc}. Second, constraints~\eqref{q-def} are equivalent to the hypograph of \(Q^{ts}_i\) because \(x^{ts}_{h} \in \{0,1\}\). Then, they are further equivalent to the intersection of all the subgradient inequalities of \(Q^{ts}_i\) because of the concavity of \(Q^{ts}_i\).
\end{proof}

We remark that \(\partial Q^{ts}_i/\partial \hat{x}^{ts}_h\) can be computed in closed-form for any \(\hat{x}^{ts}\). We call the reformulation of constraints~\eqref{q-def} in Theorem~\ref{thm:subgradient} the subgradient inequalities (SGI), with which \eqref{scenario-multi} can now be solved as a MILP. Although SGI are exponentially many, we can incorporate them on-the-fly, only when needed, as we solve the MILP in a branch-and-bound tree. That is, we ignore all SGI (and hence constraints~\eqref{q-def}) at the beginning. As the branch-and-bound tree evolves, whenever an incumbent solution with an integer-valued \(\hat{x}^{ts}\) is found, we check if the SGI associated with \(\hat{x}^{ts}\) is violated: if \(q^{ts}_i \leq Q^{ts}_i\big(\hat{x}^{ts}\big)\), then \(\hat{x}^{ts}\) is optimal to~\eqref{scenario-multi}; otherwise, we incorporate the SGI into the formulation for the current tree node and all its descendants. In commercial solvers, this can be implemented through \texttt{lazy callback}.

\section{COMPUTATIONAL RESULTS}\label{sec:numerical}
We first outline the experiment settings. Then, we report the solving times of various reformulations for~\eqref{scenario-multi}. Finally, we analyze the performance of~\eqref{2_objective} and~\eqref{m_objective}, highlighting their suitable application contexts.

\subsection{Experiment Settings}
Our numerical experiments 
utilize the ``Distance'' dataset in \cite{lamontagne2023optimising}, which seeks to locate charging stations over a four-year period in Trois-Rivi\`eres, Qu\'{e}bec, Canada. We apply data clustering to obtain \(|I|=58\) user demand nodes and \(|J|=10\) candidate sites, as illustrated in Fig.~\ref{figure1}. In addition, we consider \(|K|=2\) different types of charging stations, with no existing charging infrastructure at the outset. Other necessary data, such as construction costs and revenues, are derived from the reports by Natural Resources Canada~\cite{Canada}.
Based on the statistics of new EV registrations released by the national statistical office \cite{demanddata}, we calculate the growth of EV in Canada and accordingly that of charging demands. We assume that the annual growth rates of charging demands follow the Gaussian distribution, as confirmed by the Shapiro-Wilk test, with an average of 43.5\% and a standard deviation (S.D.) of 9.6\%. Moreover, to construct~\eqref{scenario-multi}, we generated \(|S|=81\) scenarios, with the growth rate at each demand node independently drawn from the aforementioned distribution. All the models are solved in Gurobi 11.0.0 using the default solver configurations.

\begin{figure}[thpb]
      \centering
      \includegraphics[width = 0.4\textwidth]{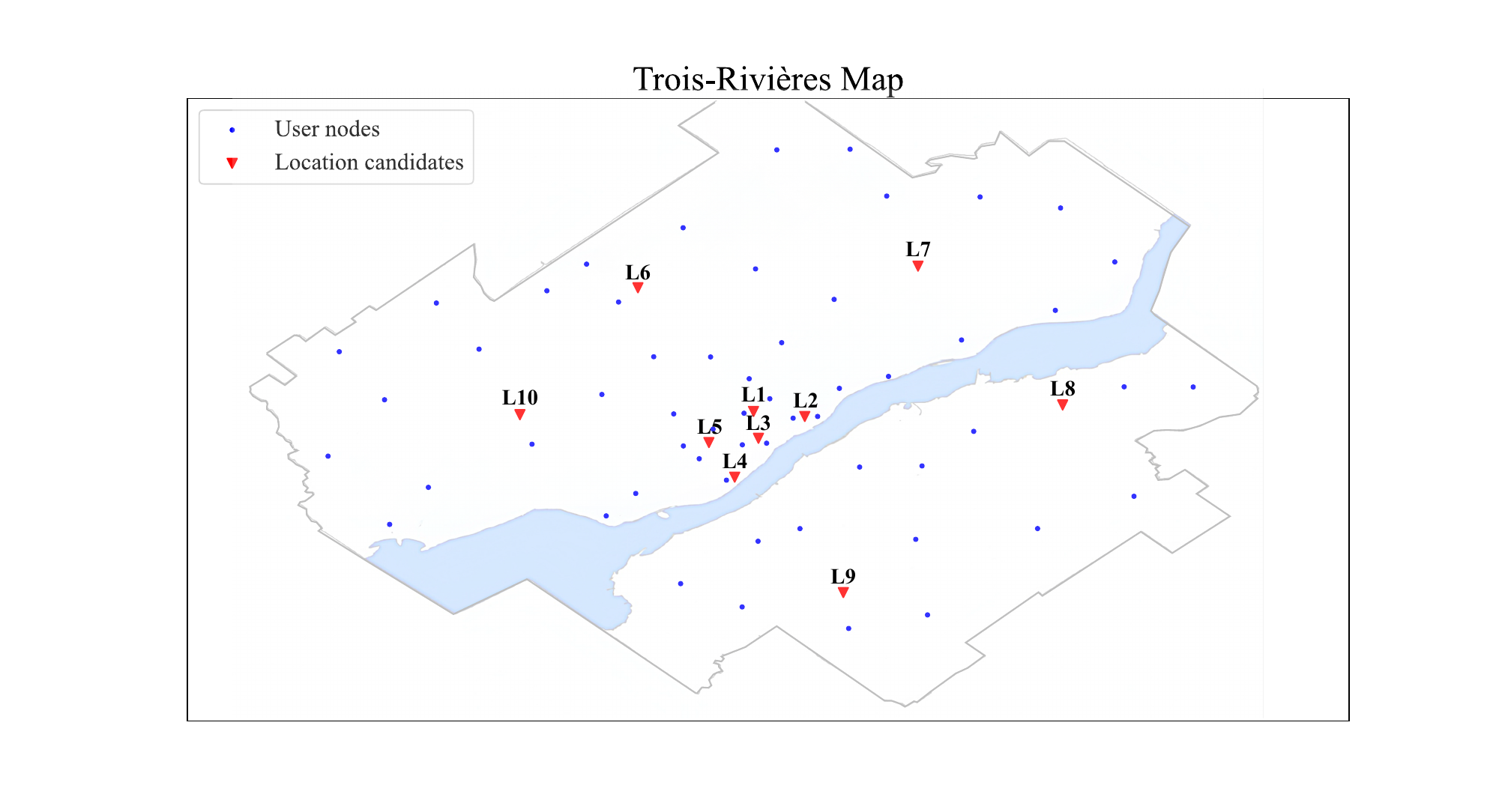}
      \caption{User demand nodes and candidate sites in Trois-Rivi\`eres.}
      \label{figure1}
\end{figure}

\begin{table}[htbp]
    \renewcommand{\arraystretch}{1.1}
    \setlength{\tabcolsep}{4pt}
    \centering
    \caption{Comparison among various reformulations}
    \label{table:compute}
    \begin{tabular}{c c c c c c}
    \toprule
         & Approach & \makecell{Continuous\\ variables} & \makecell{Binary\\ variables} & \makecell{Constraints} & CPU time\\
    \midrule
         \multirow{3}*{\eqref{2_objective}} & SGI & 232 & 80 & 316 & 1.33s\\ 
          & \cite{borrero2016simple}--R1 & 4,872 & 80 & 18,876 & 8.69s \\
          & \cite{borrero2016simple}--R4 & 2,356 & 2,204 & 9,403 & 155s\\
    \midrule
        \multirow{3}*{\eqref{m_objective}} & SGI & 18,792 & 6,480 & 29,656 & 3,226s\\
        &\cite{borrero2016simple}--R1 & 394,632 & 6,480 & 1,528,956 & 5,059s \\ 
          & \cite{borrero2016simple}--R4 & 190,836 & 178,524 & 732,483 & 47,347s\\
    \bottomrule
    \end{tabular}
\end{table}

\subsection{Comparison of Solving Time}
We compare our approach (SGI) with the state-of-the-art approaches R1 and R4 in~\cite{borrero2016simple}.
Table \ref{table:compute} compares the formulation size and CPU time of these approaches. From this table, we observe that, for both~\eqref{2_objective} and~\eqref{m_objective}, SGI outperforms R1 and R4 in solving time significantly. This can be explained by its smaller formulation size. In addition, our approach incorporates the SGI on-the-fly as lazy cuts, further reducing the computational burden. This empirical results confirm the theoretical comparison in Table~\ref{table:summary}.

\subsection{Comparison of~\eqref{2_objective} and~\eqref{m_objective} Results}
The result of~\eqref{m_objective} can concentrate in areas with higher demand, thereby revealing significant dynamics. We take two scenarios to illustrate this feature as shown in Fig. \ref{figure-year3}. On these maps, the size and intensity of the blue circles indicate the level of demand at each user node: larger and darker circles signify higher demand. The planning decisions remain the same across the first two years and the dynamics begin to manifest in the third year. In scenario A, the demand growth is relatively higher in the southern region, leading to the planning of new charging stations at locations L4 and L9 in year 3. In comparison, Scenario B witnesses a more substantial demand within the central area and new stations are concentrated around the center (L3, L5). On average, adopting~\eqref{m_objective} results in 2.26\% higher objective value compared to the upfront, one-time planning of~\eqref{2_objective}.

\begin{figure}[thpb]
      \centering
      \includegraphics[width = 0.45\textwidth]{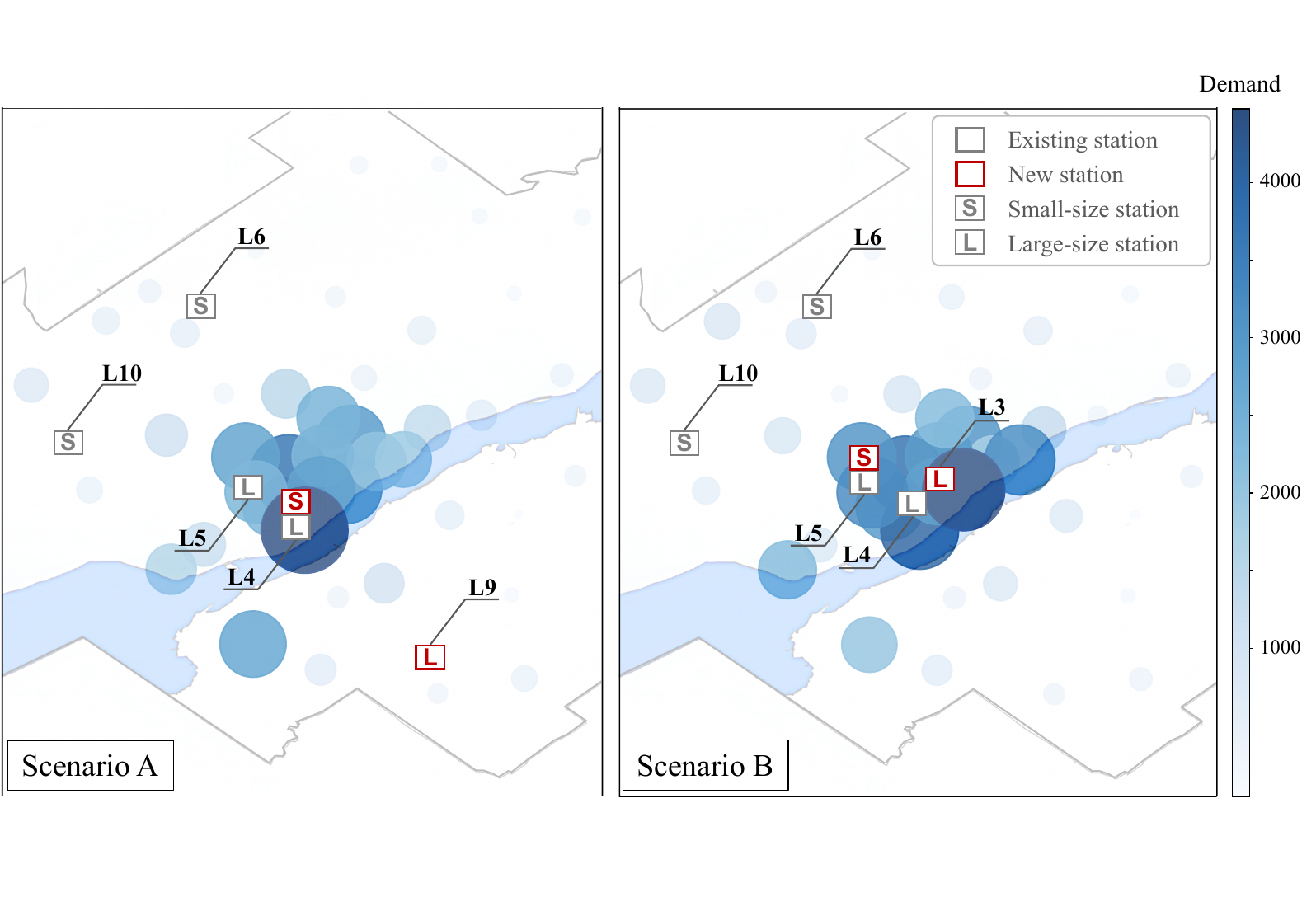}
      \caption{Decision dynamics in year 3 of scenario A and B, with A showing more demand in the south and B in the center.}
      \label{figure-year3}
\end{figure}

To demonstrate the advantages of~\eqref{m_objective} more clearly, we assume another setting: the city develops new sub-centers during the planning period. Two scenarios are picked to illustrate~\eqref{m_objective} decisions, as shown in Fig. \ref{figure-nands}. In scenario C, new city center emerges in the northwest. Thus, both large-scale and small-scale stations are built in L6 and L10. In scenario D, new city center emerges in the southeast and L9 receives more attention. However, in~\eqref{2_objective}, due to the use of average value across scenarios, more stations are established in the original center (see Fig. \ref{figure-middle}). The revenue gap between the two models expands to 9.40\%.
This highlights the multi-stage model's ability to capture demand dynamics and the advantage of flexible decision-making.

\begin{figure}[thpb]
      \centering
      \includegraphics[width = 0.45\textwidth]{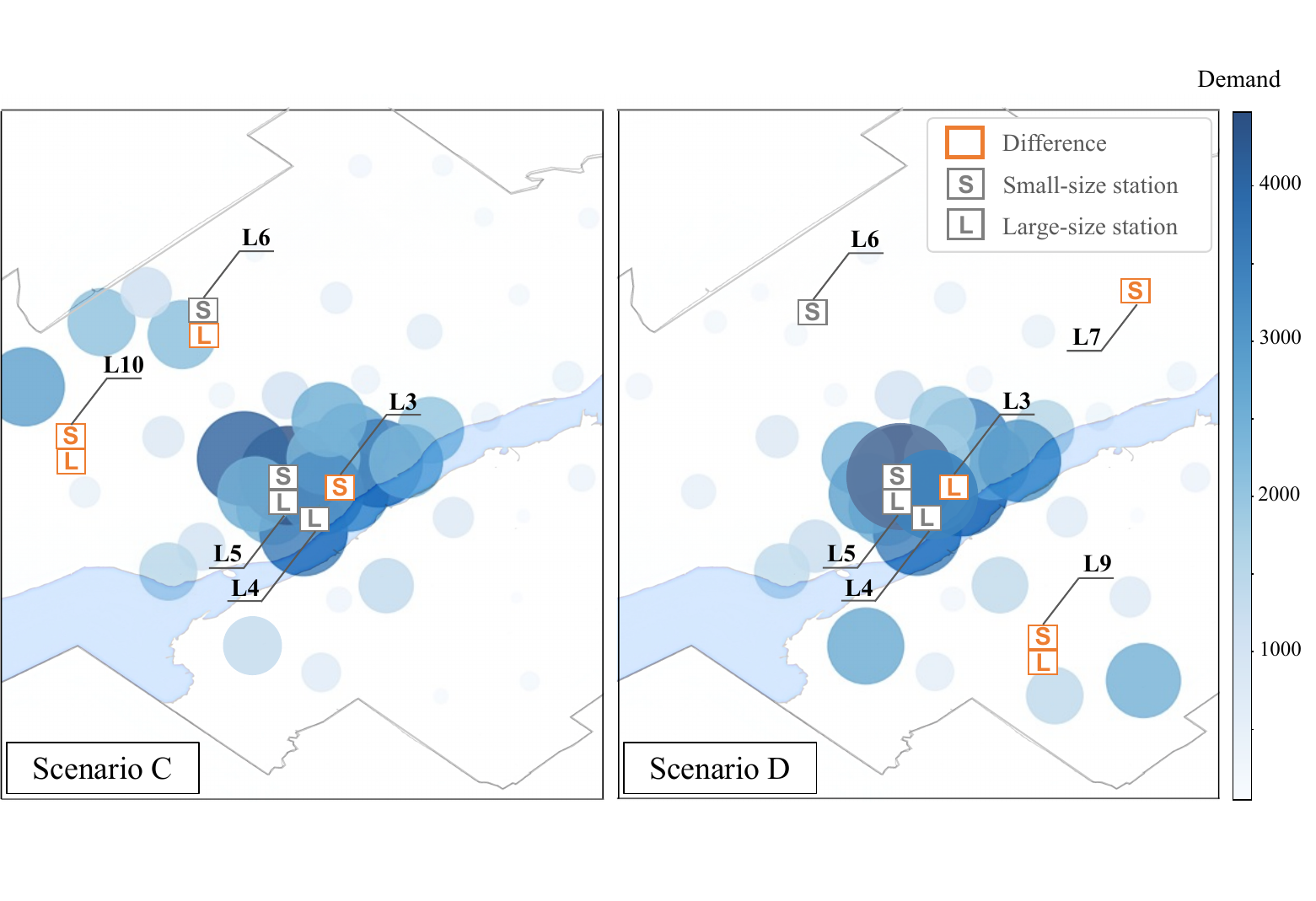}
      \caption{Planning under scenario C and D, with C showing a new urban center in the northwest and D in the southeast.}
      \label{figure-nands}
\end{figure}
\begin{figure}[thpb]
      \centering
      \includegraphics[width = 0.45\textwidth]{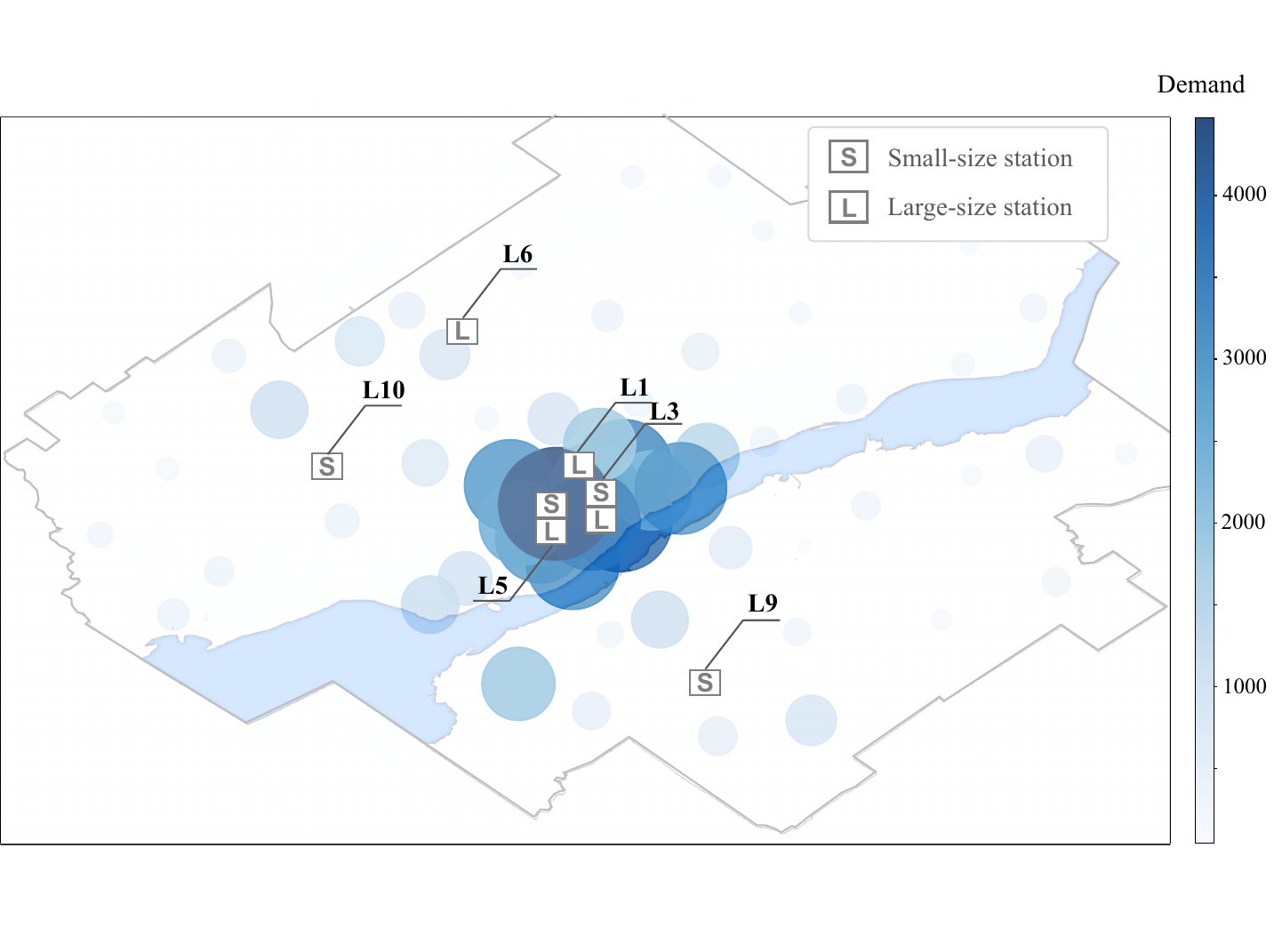}
      \caption{Planning of two-stage model when new urban centers may emerge.}
      \label{figure-middle}
\end{figure}

\subsection{Sensitivity Analysis}
\subsubsection{Variability of demand growth}
To assess the impact of growth variability on model performance, we conduct a series of experiments by increasing the S.D. of growth rate. Table \ref{table:sigma} summarizes the objective gap between~\eqref{2_objective} and~\eqref{m_objective} with S.D. varying from 9.6\% to 30\%. The advantage of~\eqref{m_objective} amplifies as the variability in the growth rate increases. This prescribes that, in situations where the EV market is subject to significant influences (e.g., from policy shifts, technological progress, or economic variations), \eqref{m_objective} produces a more promising location policy than~\eqref{2_objective}.

\begin{table}[htbp]
    \centering
    \setlength{\tabcolsep}{6pt}
    \caption{Impact of growth variability on model performance}
    \label{table:sigma}
    \begin{tabular}{c c c c c c}
    \toprule
        S.D. of growth rate & 9.6\% & 15\% & 20\% & 25\% & 30\%\\
    \midrule
         Objective gap & 2.26\% & 2.82\% & 3.43\% & 4.11\% & 5.21\%\\
    \bottomrule
    \end{tabular}
\end{table}

\subsubsection{Distance-to-charge}
is a key factor determining which station users choose to patronise. Here, we explore the performance difference between~\eqref{2_objective} and~\eqref{m_objective} when users emphasize distance differently. We conduct experiments on the ``Simple'' datasets of \cite{lamontagne2023optimising}, where the coefficient $\beta$ is reduced to 1/10 of its original value as shown in Table \ref{table:distance}. Under the new settings, the gap narrows to merely 0.33\%, indicating that it is indifferent to apply~\eqref{2_objective} or~\eqref{m_objective} when users become much more willing to travel to charge. This makes sense because when distance-to-charge becomes less of a concern for users, it makes less difference where to locate the charging stations. Accordingly, the flexibility provided by~\eqref{m_objective} devalues. Hence, in contexts where the distance-to-charge is not a significant concern (e.g., for rural EV users), both~\eqref{2_objective} and~\eqref{m_objective} are viable options.

\begin{table}[htbp]
    \centering
    \setlength{\tabcolsep}{12pt}
    \caption{Impact of distance coefficient}
    \label{table:distance}
    \begin{tabular}{c c c}
    \toprule
        Dataset & Distance coefficient $\beta$ & Objective gap\\
    \midrule
        Distance & -0.63 & 2.26\% \\
        Simple & -0.063 & 0.33\% \\
    \bottomrule
    \end{tabular}
\end{table}

\subsubsection{Demand growth heterogeneity}
The growth rate of EV users may vary across different areas. We divide the urban area into two categories: central and suburb. By adjusting the growth rates for these two regions separately, we explore how their difference affects the performance difference of~\eqref{2_objective} and~\eqref{m_objective} in Table~\ref{table:mu}. Therein, the first row is the percentage by which the average growth rate in the suburbs surpasses that in the center. These adjustments determine whether demands become more concentrated or spread out, which is reported in the third row (the proportion at the end of the planning horizon). Our findings reveal that when the bulk of the demand is confined to the smaller central area, it is optimal to locate among the five central locations (L1--L5), reducing the diversity of planning. Consequently, the advantage of using~\eqref{m_objective} over~\eqref{2_objective} decreases swiftly. Therefore, \eqref{m_objective} is more advantageous when demands are more evenly distributed across regions.

\begin{table}[htbp]
    \centering
    \caption{Impact of growth rate heterogeneity}
    \setlength{\tabcolsep}{4pt}
    \label{table:mu}
    \begin{tabular}{c c c c c c}
    \specialrule{0.1em}{1.5pt}{1.5pt}
        Growth rate difference & -20\% & -10\% & 0\% & 10\% & 20\%\\
    \specialrule{0.03em}{1pt}{2pt}
         Objective gap & 0.57\% & 1.24\% & 2.26\% & 2.56\% & 2.74\%\\
    \specialrule{0em}{2pt}{3pt}
         \makecell{Proportion of central\\area demand} & 77.64\% & 70.67\% & 64.36\% & 57.51\% & 50.78\%\\
    \specialrule{0.1em}{1pt}{1pt}
    \end{tabular}
\end{table}

\section{CONCLUSIONS}\label{sec:conclusion}
We studied sequential location of charging stations considering stochastic charging demand growth. We incorporated EV user choice behavior using MNL and formulated two-stage and multi-stage stochastic optimization models. We derived SOC representations for the models and designed solution approaches using SGI, which significantly accelerates the solution process. Through numerical experiments using the Trois-Rivi\`eres datasets, we confirmed the superiority of our proposed model and solution approaches. The multi-stage model proved to be promising in generating higher revenues. Sensitivity analyses further showed that~\eqref{m_objective} becomes particularly prominent under conditions of high demand growth rate variability, increased demand heterogeneity, and high user sensitivity to the distance-to-charge. 

\addtolength{\textheight}{-0cm}   










\end{document}